\documentclass[11pt]{article}

\usepackage[margin=1in]{geometry}
\usepackage{amsmath,amssymb,amsthm,booktabs,microtype,xcolor}
\usepackage[T1]{fontenc}
\usepackage{lmodern}
\definecolor{linkblue}{RGB}{0,70,140}
\usepackage[
  colorlinks=true,
  linkcolor=linkblue,
  citecolor=linkblue,
  urlcolor=linkblue,
  pdfborder={0 0 0},
  bookmarksopen=true
]{hyperref}
\allowdisplaybreaks

\newtheorem{theorem}{Theorem}
\newtheorem{lemma}[theorem]{Lemma}

\newcommand{\Z}{\mathbb{Z}}
\newcommand{\thmref}[1]{Theorem~\ref{#1}}
\newcommand{\lemref}[1]{Lemma~\ref{#1}}

\title{Positive Lower Density for Hofstadter's $ab-1$ Problem}
\author{Samuel Korsky}
\date{July 20, 2026}

\begin{document}

\maketitle

\begin{abstract}
\noindent
Let $A$ be the smallest set of positive integers containing $2$ and $3$ such that $ab-1\in A$ whenever $a,b\in A$ are distinct. We prove that $A$ has positive lower density, answering a problem of Erd\H{o}s attributed to Hofstadter.
\end{abstract}

\section{Introduction}

Let $A$ be the smallest set of positive integers containing $2$ and $3$ and closed under $ab-1$ for distinct $a,b\in A$. Erd\H{o}s, attributing the question to Hofstadter, asked whether $A$ has positive lower density \cite{Erdos1977,ErdosGraham1980}; this is Erd\H{o}s Problem 424 and OEIS A005244 \cite{Bloom424,OEISoriginal}.

\begin{theorem}\label{thm:main}
There is a constant $c>0$ such that
\[
|A\cap[1,x]|\geq cx
\]
for all sufficiently large $x$.
\end{theorem}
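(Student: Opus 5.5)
The approach is to exhibit an explicit family of arithmetic progressions (or unions of residue classes) contained in $A$ beyond some point. Concretely, I will find a fixed modulus $m$, a nonempty set of residues $R\subseteq \Z/m\Z$, and a threshold $N_0$ such that every $n\ge N_0$ with $n \bmod m\in R$ lies in $A$. This gives $|A\cap[1,x]|\ge (|R|/m)x - O(1)$, which is \thmref{thm:main}. The engine is the linear structure of the operation: if $a\in A$ is fixed and $b$ ranges over a set $B\subseteq A\setminus\{a\}$, then $ab-1$ ranges over $aB-1\subseteq A$. So any progression in $A$ is mapped to another progression with common difference multiplied by $a$. Iterating with $b$ fixed instead yields $A\supseteq bA-1$ as well.

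\textbf{Step 1 (computational seed).} Generate $A$ explicitly up to some bound $M$, starting from $2,3\to5\to 9,14\to 17,26,27,41,\dots$. In this range, identify a long block of consecutive integers, or a long stretch of some residue class mod a small $m$, that is entirely contained in $A$. The empirical density data in OEIS A005244 suggests that $A$ is dense enough for this.

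\textbf{Step 2 (bootstrapping).} Suppose $A\supseteq \{n\in[L,U]: n\equiv r \pmod m\}$ with $U$ substantially larger than $L$. Choose small elements $a_1,\dots,a_k\in A$. Each image $a_i\cdot(\text{that set})-1$ is a progression with difference $a_im$, running over $[a_iL-1,a_iU-1]$. The goal is to pick the $a_i$ so that these images together cover all of the residue classes $r'\in R$ modulo $m$ on a new interval $[L',U']$ with $U'/L'$ at least as large as $U/L$. For example, if $\gcd(a_i,m)=1$ and the $a_i$ are pairwise close, the images cover full mod-$m$ residue classes only after enlarging the modulus. So the cleaner formulation is to track a set $S\subseteq A$ satisfying $S\supseteq\{n\in[L,U]: n\bmod m\in R\}$ and to require, for each $r'\in R$ and each $n$ in the next dyadic window, that $n+1=a\cdot s$ for some admissible $a$ and $s$. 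Equivalently, $n+1$ must have a divisor $a$ from a fixed finite list $\mathcal{A}\subset A$ with $(n+1)/a$ in the previous window and in class $R$. This is a finite congruence check modulo $\operatorname{lcm}(m,\mathcal{A})$: because the window ratio $U/L\ge \max\mathcal{A}/\min\mathcal{A}$, the window condition holds automatically once the congruence condition does. Induction over the windows $[L\lambda^j,U\lambda^j]$ then covers every large $n$ in class $R$.

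\textbf{Main obstacle.} The main obstacle is Step 2's covering condition. For every $r'\in R$, the requirement is that $r'+1$ be congruent to $a\cdot r$ for some $a\in\mathcal{A}$ and $r\in R$, with the congruence consistent modulo $a m$. Moreover, this must hold for all lifts, not just residues. Since $n+1$ must actually be divisible by $a$, no single $a$ covers everything. Instead we need the sets $\{n: a\mid n+1,\ (n+1)/a\bmod m\in R\}$ for $a\in\mathcal{A}$ to cover the class $R$ in $\Z/(m\cdot\operatorname{lcm}\mathcal{A})$. This is a covering-system problem. I would try first $R$ defined by a condition like ``$n\equiv 2\pmod 3$'' combined with small $a\in\{2,3,5,9,14,17\}$, and search by computer for a self-covering pair $(m,R)$. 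Failing an exact covering, I would weaken the goal to positive lower density via a density recursion: with $d_j$ the relative density in window $j$, the images give $d_{j+1}\ge f(d_j)$, where $f$ has an attracting fixed point above $0$. This uses inclusion–exclusion over distinct $a$, with the proviso $a\ne b$ handled by discarding one element.
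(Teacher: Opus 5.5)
Your argument has a genuine gap at its one substantive step. Everything depends on the existence of a triple $(m,R,\mathcal A)$ satisfying your covering condition, and you give no argument that such a triple exists; you only propose to search for one. If such a triple were found, your Step~2 would be fine: it is really a strong induction on $n$ from a computed base range. But existence is not a routine check. It asserts that $A$ contains an entire residue class, which is much stronger than the theorem, and the constraints are severe:
\begin{itemize}
\item Every class $r \bmod m$ with $\gcd(r+1,m)=1$ is ruled out. By Dirichlet's theorem such a class contains infinitely many $n$ with $n+1$ prime, and no such $n\ge 4$ lies in $A$.
\item In an admissible class, put $g=\gcd(r+1,m)$. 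Then $n+1=gp$ with $p$ prime occurs infinitely often, and for such $n$ only multipliers $a\mid g$ can be used.
\item The covering must then succeed along the forced chain $gp\mapsto (g/a)p+1\mapsto\cdots$ for every such prime, using congruence information alone.
\end{itemize}
The OEIS data say nothing about whether this is possible. A set can have substantial density without containing any full progression.

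The fallback density recursion does not work as stated. The size of $\bigcup_{a\in\mathcal A}(aS_j-1)$ in a window is not a function of the density $d_j$. For example,
\[
(2S-1)\cap(3S-1)=\{6u-1:\ 2u\in S,\ 3u\in S\},
\]
which depends on how $S$ sits on $2\Z$ versus $3\Z$. Moreover, the semigroup generated by the maps $T_a(x)=ax-1$ is not free (e.g.\ $T_9\circ T_2=T_3\circ T_2\circ T_3$), so different compositions produce literally the same integers. From $d_j$ alone you get only the contribution of a single multiplier, which loses a factor $a\ge 2$. So inclusion--exclusion yields no map $f$ with a positive attracting fixed point.

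The missing ingredient is an injectivity mechanism, and that is what the paper builds:
\begin{itemize}
\item It restricts to compositions coded by paths through a finite interval partition. Distinct paths then have disjoint inverse-image intervals, so they give distinct affine maps (\lemref{lem:injective}).
\item It steers the prime exponents of the slope by feedback among four multiplier assignments. This produces a positive recurrent state (\lemref{lem:recurrence}) whose return paths have slope exactly $q^m$ and probability exactly $q^{-m}$.
\item The renewal theorem then gives $\gg q^{m}$ distinct maps of slope $q^m$ for $m$ in an arithmetic progression. Their values at $17$ are distinct elements of $A$ lying in $(16q^m,17q^m)$.
\end{itemize}
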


The proof constructs many distinct affine maps having the same slope. Compositions of the maps $T_a(x)=ax-1$ are encoded by paths through a finite partition of an interval. The transition probabilities are chosen so that the probability of any return to a fixed state is exactly the reciprocal of the slope of the corresponding affine map. By switching among four assignments of multipliers, we keep the exponents of $2,3,5,$ and $7$ in the slope close to fixed proportions. Consequently, every such return has slope $q^m$ for one fixed integer $q$. The arithmetic renewal theorem then shows that, for infinitely many values of $m$, there are on the order of $q^m$ distinct affine maps with slope $q^m$.

The restriction to distinct inputs is handled by using the multipliers
\[
2,3,5,9,14\in A
\]
and evaluating the resulting maps at $17\in A$. Indeed,
\[
5=2\cdot3-1,\qquad 9=2\cdot5-1,\qquad
14=3\cdot5-1,\qquad 17=2\cdot9-1,
\]
with distinct inputs throughout. Since every multiplier is less than $17$ and $T_a(x)>x$ for $a\geq2$ and $x>1$, every later operation at the seed $17$ also has distinct inputs.

Affine orbit sets and recursively generated integer sets were studied by
Klarner and Rado \cite{KlarnerRado1974}; Klarner subsequently developed a
zero-density algorithm, freeness criteria, and finite-automaton descriptions
for related affine closures
\cite{Klarner1981,Klarner1982,Klarner1988}, and Lagarias surveys this line of
work \cite{Lagarias2016}. A particularly close predecessor is the work of
Shamazov and Talambutsa \cite{ShamazovTalambutsa2026}, whose lower-bound
arguments count many distinct affine compositions having a common slope,
using freeness and, for their positive-density result, an exact covering
hypothesis. Inverse-interval ping-pong arguments for proving freeness of
affine semigroups also appear in Kolpakov and Talambutsa
\cite{KolpakovTalambutsa2022}. The semigroup used here is not free -- for
example,
\[
T_9\circ T_2=T_3\circ T_2\circ T_3,
\]
so neither global freeness nor free-semigroup permutation counting directly
provides the injectivity needed here; instead, the interval coding below
isolates an injective graph-directed family of paths. 

At a broader
methodological level, finite-state inverse-branch constructions and
place-dependent transition probabilities have precedents in graph-directed
and iterated-function systems
\cite{MauldinWilliams1988,BarnsleyEtAl1988}. The feedback rule used to select
a favorable drift direction is related in spirit to Blackwell approachability
and to MaxWeight and state-dependent Foster--Lyapunov stability methods
\cite{Blackwell1956,TassiulasEphremides1992,YukselMeyn2013}. Finally, the
renewal input is classical \cite{Feller}, and renewal methods have also been
used in graph-directed counting problems and in the density theory of
expanding affine orbit systems \cite{HamblyNyberg2003,MiaoXu2026}. The
distinguishing feature of the present proof is the combination of these
ideas: an injective graph-directed sublanguage inside a nonfree affine
semigroup, feedback control of prime-exponent imbalances forcing slopes
$q^m$, exact reciprocal-slope return probabilities, and renewal counting of
the resulting maps.

\section{A Finite Interval Coding}

Put
\[
T_a(x)=ax-1,
\qquad
h_a(x)=T_a^{-1}(x)=\frac{x+1}{a}.
\]
Let $K_i=[e_i,e_{i+1})$, where
\[
\begin{aligned}
(e_1,\ldots,e_{20})=\biggl(
&\frac19,\frac{10}{81},\frac17,\frac15,\frac29,\frac27,
\frac13,\frac{10}{27},\frac25,\frac{11}{27},\frac37,\frac{37}{81},\frac59,\frac35,\frac23,
\frac{19}{27},\frac{59}{81},\frac45,\frac{23}{27},1
\biggr).
\end{aligned}
\]
Write $K_r\text{--}K_s=K_r\cup\cdots\cup K_s$. The following are exact interval identities.

\begin{center}
\small
\begin{tabular}{ccl@{\qquad}ccl}
\toprule
state & multiplier & image & state & multiplier & image \\
\midrule
$K_1$ & $14$ & $K_{13}\text{--}K_{16}$ & $K_{11}$ & $3$ & $K_6\text{--}K_7$ \\
$K_2$ & $14$ & $K_{17}\text{--}K_{19}$ & $K_{12}$ & $3$ & $K_8\text{--}K_{14}$ \\
$K_2$ & $9$ & $K_1\text{--}K_5$ & $K_{13}$ & $3$ & $K_{15}\text{--}K_{17}$ \\
$K_3$ & $9$ & $K_6\text{--}K_{17}$ & $K_{13}$ & $2$ & $K_1\text{--}K_3$ \\
$K_4$ & $9$ & $K_{18}\text{--}K_{19}$ & $K_{14}$ & $3$ & $K_{18}\text{--}K_{19}$ \\
$K_5$ & $5$ & $K_1\text{--}K_{10}$ & $K_{14}$ & $2$ & $K_4\text{--}K_6$ \\
$K_6$ & $5$ & $K_{11}\text{--}K_{14}$ & $K_{15}$ & $2$ & $K_7\text{--}K_9$ \\
$K_7$ & $5$ & $K_{15}\text{--}K_{18}$ & $K_{16}$ & $2$ & $K_{10}\text{--}K_{11}$ \\
$K_8$ & $5$ & $K_{19}$ & $K_{17}$ & $2$ & $K_{12}\text{--}K_{13}$ \\
$K_8$ & $3$ & $K_1\text{--}K_3$ & $K_{18}$ & $2$ & $K_{14}\text{--}K_{15}$ \\
$K_9$ & $3$ & $K_4$ & $K_{19}$ & $2$ & $K_{16}\text{--}K_{19}$ \\
$K_{10}$ & $3$ & $K_5$ & & & \\
\bottomrule
\end{tabular}
\end{center}

At $K_2$ we may use $14$ or $9$, and at $K_8$ we may use $5$ or $3$. At $K_{13}$ and $K_{14}$ we make the same choice, using either $3$ at both states or $2$ at both states. We consider the following four assignments:
\[
\alpha_0=(14,5,3),\quad
\alpha_1=(9,5,3),\quad
\alpha_2=(14,3,3),\quad
\alpha_3=(14,5,2),
\]
where the entries record these three choices. Let $a_i^{(\alpha)}$ be the multiplier prescribed by $\alpha$ at $K_i$, and define
\[
p_{ij}^{(\alpha)}=\frac{|K_j|}{a_i^{(\alpha)}|K_i|}
\]
when $K_j\subseteq T_{a_i^{(\alpha)}}(K_i)$. The intervals $h_{a_i^{(\alpha)}}(K_j)$ partition $K_i$, so $P_\alpha=(p_{ij}^{(\alpha)})$ is stochastic. In each assignment every state can reach $K_{19}$, and $K_{19}$ can reach every state; hence each $P_\alpha$ is irreducible.

For any allowed path $\gamma=(i_0,\ldots,i_r)$ with successive multipliers $a_0,\ldots,a_{r-1}$,
\begin{equation}\label{eq:telescope}
\mathbb P(\gamma)
=\prod_{t=0}^{r-1}\frac{|K_{i_{t+1}}|}{a_t|K_{i_t}|}
=\frac{|K_{i_r}|}{|K_{i_0}|}\cdot\frac1{a_0\cdots a_{r-1}}.
\end{equation}
This identity remains valid when $\alpha$ changes along the path.

Let $\nu=(\nu_2,\nu_3,\nu_5,\nu_7)$ be the prime-exponent vector of the accumulated slope, and put
\[
H(\nu)=(\nu_2-31\nu_7,\ \nu_3-26\nu_7,\ \nu_5-11\nu_7)\in\Z^3.
\]
Write $I_t$ for the interval state after $t$ steps, let $\nu(t)$ be the prime-exponent vector of the slope accumulated during those steps, and put
\[
H_t=H(\nu(t)).
\]
The coefficients $31,26,11$ are chosen so that the four stationary mean increments below have the origin in the interior of their convex hull. Put
\[
q=2^{31}3^{26}5^{11}7.
\]
Then $H(\nu)=0$ precisely when
\[
\nu=m(31,26,11,1)
\]
for some $m\geq0$, in which case the corresponding slope is $q^m$.

For fixed $\alpha$, let $d_\alpha$ be the stationary mean increment of $H$. Solving $\pi_\alpha P_\alpha=\pi_\alpha$ over $\mathbb Q$ and summing the one-step increments against $\pi_\alpha$ gives
\[
\begin{array}{c|c}
\alpha & d_\alpha \\ \hline
\alpha_0 & \dfrac1{209179}(82502,60472,22315)\\[1ex]
\alpha_1 & \dfrac1{9240032}(4122816,3176276,1187745)\\[1ex]
\alpha_2 & \dfrac1{154297}(-17374,2952,-12847)\\[1ex]
\alpha_3 & \dfrac1{938829}(-410774,-384064,-82315).
\end{array}
\]
The origin lies in the interior of
\[
\operatorname{conv}\{d_{\alpha_0},d_{\alpha_1},d_{\alpha_2},d_{\alpha_3}\}.
\]

\section{Recurrence}

\begin{lemma}\label{lem:recurrence}
There is a deterministic way to choose among $\alpha_0,\ldots,\alpha_3$ such that the interval state together with the imbalance vector has a positive recurrent state.
\end{lemma}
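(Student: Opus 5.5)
We define a MaxWeight-type feedback rule and prove positive recurrence with a Foster--Lyapunov drift argument for the Markov chain $(I_t,H_t)$ on the countable space $\{K_1,\ldots,K_{19}\}\times\Z^3$. Since the origin lies in the interior of $\operatorname{conv}\{d_{\alpha_0},\ldots,d_{\alpha_3}\}$, there is $\delta>0$ such that for every unit vector $u\in\mathbb R^3$ some $k$ satisfies $\langle u,d_{\alpha_k}\rangle\le-\delta$. The rule is deterministic given the current $H$: among the $\alpha_k$ minimizing $\langle H,d_{\alpha_k}\rangle$, take the one with smallest index.

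A rule that switches $\alpha$ at every step does not let the stationary drifts $d_\alpha$ act directly, so we work in blocks. First, for each $\alpha$ we introduce the Poisson-equation corrector. Let $f_\alpha(i)$ be the one-step mean increment of $H$ from state $K_i$ under $P_\alpha$. By irreducibility, $g_\alpha=(I-P_\alpha)^{-1}$ applied to $f_\alpha-d_\alpha$ exists on the mean-zero complement, so $g_\alpha$ is a bounded vector-valued function with $f_\alpha-d_\alpha=g_\alpha-P_\alpha g_\alpha$.

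Choose a block length $N$. Time is divided into blocks of $N$ steps. The rule is evaluated at the start of each block and held fixed for the whole block. To make it depend only on the current state and remain Markov, we append the phase $t\bmod N$ and the frozen choice to the state, which are finitely many extra coordinates. Within a block where $\alpha$ is held fixed, the Poisson identity gives
\[
\mathbb E[H_{t+N}-H_t\mid I_t,H_t]=N d_\alpha+O(1),
\]
with the $O(1)$ uniform in the state, because the corrector is bounded. The one-step increments of $H$ are bounded, since each multiplier contributes at most a fixed vector.

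We take the Lyapunov function $V(H)=|H|^2$ and evaluate it at block boundaries. With $\alpha$ chosen as above,
\[
\mathbb E[V(H_{t+N})-V(H_t)]=2\langle H_t,\ N d_\alpha+O(1)\rangle+O(N^2).
\]
This is at most $-2\delta N|H_t|+C|H_t|+CN^2$, where $C$ does not depend on $N$. Fix $N$ with $\delta N>C$. Then the drift is at most $-1$ outside a finite ball $\{|H|\le R\}$, and bounded inside it.

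Foster's criterion then shows that the embedded block chain is positive recurrent on its closed communicating classes that meet the finite set $\{|H|\le R\}\times\{\text{states}\}$. Hence some state, for instance $(K_{19},H)$ with $|H|\le R$ at phase $0$, is positive recurrent. Expected return times for the full-time chain are at most $N$ times those for the block chain, so positive recurrence passes to the full chain.

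The main obstacle is making the controlled chain genuinely Markov while keeping the error uniform. The corrector argument must hold with $\alpha$ frozen over the block, and the residual $O(1)$ must not depend on $H$. Freezing over blocks handles both points. We must also check that a recurrent class exists, which follows because the finite set is hit infinitely often and a finite set always contains a positive recurrent state of the induced chain. The lemma only asks for some positive recurrent state, so irreducibility of the full chain is not needed.
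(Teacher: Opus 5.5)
Your proposal is correct and follows essentially the same route as the paper: a MaxWeight choice held fixed over blocks of length $N$, a Poisson corrector giving an averaging error bounded independently of $N$, a drift bound outside a ball $|H|\le R$, and extraction of a positive recurrent state from the finite induced chain on that ball. The differences are minor: you use $V=|H|^2$ where the paper uses $|H|$ together with $|h+z|-|h|\le\langle h/|h|,z\rangle+|z|^2/(2|h|)$, and your lift to the full-time chain with phase is unnecessary, as is your example $(K_{19},H)$, which you do not justify. The paper only needs positive recurrence of the block chain $X_k=(I_{kN},H_{kN})$.
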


\begin{proof}
For fixed $\alpha$, let $\xi_\alpha(i)$ be the increment of $H_t$ on leaving $K_i$. Since $P_\alpha$ is irreducible on a finite state space and $\xi_\alpha-d_\alpha$ has stationary mean zero, the Markov-chain Poisson equation
\[
g_\alpha-P_\alpha g_\alpha=\xi_\alpha-d_\alpha
\]
has a bounded solution, coordinatewise. Hence there is a constant $C_0$, independent of $\alpha$, the initial state $i$, and $N\geq1$, such that
\begin{equation}\label{eq:averaging}
\left|
\mathbb E_i\left[\sum_{t=0}^{N-1}\xi_\alpha(I_t)\right]-Nd_\alpha
\right|\leq C_0.
\end{equation}

Since the origin lies in the interior of the convex hull of the four drift vectors, there is $\delta>0$ such that for every unit vector $v\in\mathbb R^3$,
\[
\min_\alpha\langle v,d_\alpha\rangle\leq-\delta.
\]
At each time $kN$, choose $\alpha_k\in\{\alpha_0,\alpha_1,\alpha_2,\alpha_3\}$ and use $P_{\alpha_k}$ for the next $N$ steps. If $H_{kN}=h\neq0$, choose $\alpha_k$ to minimize
\[
\left\langle \frac{h}{|h|},d_\alpha\right\rangle.
\]
Fix a deterministic tie-breaking convention, and take $\alpha_k=\alpha_0$ when $h=0$. Then
\[
X_k=(I_{kN},H_{kN})
\]
is a time-homogeneous Markov chain.

Put
\[
M_0=\max_{\alpha,i}|\xi_\alpha(i)|.
\]
Combining \eqref{eq:averaging} with
\[
|h+z|-|h|\leq \left\langle\frac h{|h|},z\right\rangle+\frac{|z|^2}{2|h|}
\]
gives
\[
\mathbb E\left[
|H_{(k+1)N}|-|H_{kN}|
\,\middle|\,
X_k=(i,h)
\right]
\leq-N\delta+C_0+\frac{M_0^2N^2}{2|h|}.
\]
First choose $N$ with $N\delta>C_0+2$, and then choose $R$ so that the right-hand side is at most $-1$ whenever $|h|>R$.

Let
\[
\mathcal C=\{(i,h):1\leq i\leq19,\ h\in\Z^3,\ |h|\leq R\}
\]
and define
\[
\tau_{\mathcal C}=\inf\{k\geq0:X_k\in\mathcal C\},
\qquad
\tau_{\mathcal C}^+=\inf\{k\geq1:X_k\in\mathcal C\}.
\]
The drift estimate and optional stopping give
\[
\mathbb E_x\left[\tau_{\mathcal C}\right]\leq |h|
\]
for $x=(i,h)\notin\mathcal C$. If $x\in\mathcal C$, then $|H_N|\leq R+M_0N$, and the Markov property gives
\[
\begin{aligned}
\mathbb E_x\left[\tau_{\mathcal C}^+\right]
&=1+\mathbb E_x\left[
\mathbf 1_{\{X_1\notin\mathcal C\}}
\mathbb E_{X_1}\left[\tau_{\mathcal C}\right]
\right]\\
&\leq1+\mathbb E_x\left[
\mathbf 1_{\{X_1\notin\mathcal C\}}|H_N|
\right]\\
&\leq1+R+M_0N.
\end{aligned}
\]
The chain induced by successive visits to $\mathcal C$ has finite state space. Choose $s$ in one of its recurrent classes. Its return time in the induced chain has finite mean, and the uniform bound above on the expected time between successive visits to $\mathcal C$ implies that its return time in $(X_k)$ also has finite mean. Thus $s$ is positive recurrent.
\end{proof}

\section{Return Paths and Renewal}

Fix the state $s=(i_\ast,h_\ast)$ supplied by \lemref{lem:recurrence}, and let
\[
\tau=\inf\{k\geq1:X_k=s\}.
\]
For each realization of this first return, record the interval states during the $N\tau$ underlying steps. Let $\mathcal R$ be the collection of all finite interval paths obtained in this way. For $\gamma=(i_0,\ldots,i_r)\in\mathcal R$, write
\[
G_\gamma=T_{a_{r-1}}\circ\cdots\circ T_{a_0},
\qquad
J_\gamma=G_\gamma^{-1}(K_{i_r})\subseteq K_{i_0}.
\]

\begin{lemma}\label{lem:injective}
Distinct finite concatenations of paths in $\mathcal R$ give distinct affine maps.
\end{lemma}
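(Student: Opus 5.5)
The plan is to show that the affine map $G_\gamma$ determines the concatenated path $\gamma$ by reading the path backwards through the interval coding. The key tool is that for each state $K_i$ and each admissible multiplier $a$ at $K_i$, the branches $h_a(K_j)$ with $K_j\subseteq T_a(K_i)$ partition $K_i$. We also use that every path in $\mathcal R$ begins and ends at the fixed interval state $i_\ast$, with imbalance $h_\ast$. Concatenations of paths in $\mathcal R$ are therefore allowed interval paths from $K_{i_\ast}$ to $K_{i_\ast}$. Along such a path the assignment $\alpha$ is determined deterministically by the current $X_k$, so the multiplier used at each step is a function of the history: the interval state and the accumulated imbalance at the last block boundary.

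First we record the inverse-branch structure. For an allowed path $\gamma=(i_0,\ldots,i_r)$ with multipliers $a_0,\ldots,a_{r-1}$, the set $J_\gamma=G_\gamma^{-1}(K_{i_r})$ is a nonempty half-open interval contained in $K_{i_0}$. It is obtained by applying $h_{a_{r-1}},\ldots,h_{a_0}$ successively, and at each stage $h_{a_t}(J')\subseteq h_{a_t}(K_{i_{t+1}})\subseteq K_{i_t}$. By the partition property, for two distinct concatenations $\gamma\neq\gamma'$ we will find the first index where they differ. Either the interval states differ there while the multiplier is the same (it is determined by the common history, since the controller is deterministic), or one word is a proper prefix of the other.

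Suppose now that $G_\gamma=G_{\gamma'}$. Then the slopes agree, so $a_0\cdots a_{r-1}=a'_0\cdots a'_{r'-1}$, and the constant terms agree as well. We evaluate on the starting interval. Since $G_\gamma(J_\gamma)=K_{i_\ast}$ and $G_{\gamma'}(J_{\gamma'})=K_{i_\ast}$ with the same affine map, we get $J_\gamma=J_{\gamma'}$. In the differing-state case, at the first disagreement index $t$ the two paths share the prefix map $G_{<t}$ and the multiplier $a_t$, but $i_{t+1}\neq i'_{t+1}$. Hence $J_\gamma\subseteq G_{<t}^{-1}h_{a_t}(K_{i_{t+1}})$ and $J_{\gamma'}\subseteq G_{<t}^{-1}h_{a_t}(K_{i'_{t+1}})$, and these sets are disjoint because the branches of $h_{a_t}$ partition $K_{i_t}$. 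This contradicts $J_\gamma=J_{\gamma'}$. In the prefix case, write $\gamma'=\gamma\beta$ with $\beta$ a nonempty loop at $i_\ast$. Then $G_\beta\circ G_\gamma=G_\gamma$ forces $G_\beta=\mathrm{id}$, which is impossible because $G_\beta$ has slope at least $2$.

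The step I expect to be the main obstacle is justifying that the multiplier at the first disagreement really coincides for both words. This is where the definition of $\mathcal R$ matters: the choice $\alpha_k$ depends on $H_{kN}$, which is computed from the common prefix, and every path in $\mathcal R$ starts from the same state $s$. So the state $X_k$ at each block boundary is determined by the prefix, whether we are inside a single return or across a boundary between concatenated returns. One subtlety must be checked carefully. Block boundaries occur every $N$ steps and returns have length $N\tau$, so the block phase is also determined by the prefix length. Hence the two words agree on the choice of $\alpha$ up to the first differing interval state.
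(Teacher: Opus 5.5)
Your proof is correct and follows essentially the paper's argument: the deterministic controller forces a common multiplier at the first disagreement, the disjoint inverse branches then contradict $J_\gamma=J_{\gamma'}$, and a proper-prefix situation is ruled out by slopes; the only difference is that you run this on the whole concatenated interval word, whereas the paper treats single return paths first and then concatenations block by block. One point is left implicit, and the paper states it: distinct sequences of return paths give distinct interval words. This follows from your own observation that $X_k$ is determined by the prefix, so the return times to $s$, and hence the decomposition into first-return paths, can be read off from the word.
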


\begin{proof}
No member of $\mathcal R$ is a proper initial segment of another, by first return. Thus two distinct paths in $\mathcal R$ have a first step at which they enter different intervals. Immediately before that transition they have the same interval history; by the deterministic choice of $\alpha$, they also have the same accumulated imbalance and use the same multiplier $a$. If their next intervals are $K_j$ and $K_{j'}$, with $j\neq j'$, then
\[
h_a(K_j)\cap h_a(K_{j'})=\varnothing.
\]
The inverse-image intervals associated with the two complete paths lie in the pullbacks of these disjoint sets through their common initial composition, and are therefore disjoint. Since each path begins and ends at $K_{i_\ast}$, equality of the corresponding affine maps would force equality of their inverse images of $K_{i_\ast}$, a contradiction.

Returns to $s$ determine the decomposition into first-return paths uniquely. After deleting the common initial paths from two distinct concatenations, either both have a path remaining or only one does. In the first case, the inverse-image intervals of all continuations are contained in the disjoint intervals associated with the first differing paths. In the second case, the two affine maps have different slopes. Thus the maps are distinct.
\end{proof}

For $\gamma\in\mathcal R$, let $Q_\gamma$ be the slope of $G_\gamma$. Since a return to $s$ returns to the same interval, \eqref{eq:telescope} gives
\[
\mathbb P(\gamma)=Q_\gamma^{-1}.
\]
It also returns to the same imbalance vector, so the net prime-exponent vector is $m(31,26,11,1)$ for a unique $m\geq1$; hence $Q_\gamma=q^m$.

Let $b_m$ be the number of first-return paths with slope $q^m$. Since $s$ is positive recurrent, $\tau<\infty$ almost surely. The events corresponding to the paths in $\mathcal R$ are disjoint and exhaust the first return. Therefore
\begin{equation}\label{eq:return-law}
\sum_{m\geq1}b_mq^{-m}=1.
\end{equation}
Moreover, if such a path has length $L$, then each multiplier contributes one or two prime factors, counted with multiplicity, whereas $q^m$ has $69m$ prime factors. Thus
\[
L\leq69m\leq2L.
\]
It follows that each $b_m$ is finite. Let $M$ be the exponent of the random first-return path. Since $L=N\tau$ and $s$ is positive recurrent,
\begin{equation}\label{eq:finite-mean}
\begin{aligned}
\sum_{m\geq1}mb_mq^{-m}
&=\mathbb E_s\left[M\right]\\
&\leq\frac2{69}\cdot\mathbb E_s\left[L\right]\\
&=\frac{2N}{69}\cdot\mathbb E_s\left[\tau\right]
<\infty.
\end{aligned}
\end{equation}

\begin{proof}[Proof of \thmref{thm:main}]
Set $p_m=b_mq^{-m}$. By \eqref{eq:return-law} and \eqref{eq:finite-mean}, $(p_m)$ is a probability distribution on the positive integers with finite mean
\[
\mu=\mathbb E_s\left[M\right]=\sum_{m\geq1}mp_m.
\]
By the strong Markov property, the exponents of successive returns to $s$ are independent and have common distribution $(p_m)$. Let $c_n$ be the number of ordered concatenations of return paths whose total slope is $q^n$, including the empty concatenation for $n=0$. Each has probability $q^{-n}$, and therefore
\[
\frac{c_n}{q^n}=\sum_{k\geq0}p^{*k}(n).
\]
If $d$ is the greatest common divisor of the support of $(p_m)$, the arithmetic renewal theorem gives
\[
\frac{c_{rd}}{q^{rd}}\longrightarrow\frac d\mu>0
\]
as $r\to\infty$ \cite[Chapter XI, Section 1, p.~358]{Feller}. Hence, for some $\kappa>0$,
\[
c_{rd}\geq\kappa q^{rd}
\]
for all sufficiently large $r$.

An immediate induction shows that every nonempty composition has the form
\[
G(x)=Qx-C,
\qquad 0<C<Q.
\]
By \lemref{lem:injective}, the $c_{rd}$ concatenations give distinct maps of slope $Q=q^{rd}$, and hence distinct values at $17$. As observed in the introduction, every operation in computing $G(17)$ uses distinct elements of $A$, so $G(17)\in A$. Also
\[
16Q<G(17)<17Q.
\]
Consequently
\[
|A\cap[1,17q^{rd}]|\geq\kappa q^{rd}.
\]
For arbitrary sufficiently large $x$, choose $r$ with
$17q^{rd}\leq x<17q^{(r+1)d}$. Then
\[
\frac{|A\cap[1,x]|}{x}\geq\frac{\kappa}{17q^d}>0.
\]
\end{proof}

\section*{Acknowledgments}

The author thanks Thomas Bloom for helpful advice on the exposition, and Boris Alexeev for formalizing the proof in Lean \cite{AlexeevCodexErdos424}. GPT-5.6 Pro assisted in searching for and checking the finite interval and drift data. The author verified the argument and takes responsibility for the proof.

\end{document}